\newdimen\AAdi%
\newbox\AAbo%
\def\AAk#1#2{\s_etbox\AAbo=\hbox{#2}\AAdi=\wd\AAbo\kern#1\AAdi{}}%
\def\AAr#1#2#3{\s_etbox\AAbo=\hbox{#2}\AAdi=\ht\AAbo\raise#1\AAdi\hbox{#3}}%
\font\tenmsb=msbm10 at 12pt \font\sevenmsb=msbm7 at 8pt
\font\fivemsb=msbm5 at 6pt
\def\Bbb#1{{\tenmsb\fam\msbfam#1}}
\newtheorem{thm}{Theorem}[section]
\newtheorem{cor}{Corollary}[section]
\newtheorem{pro}{Proposition}[section]
\newcommand{\ba}{\begin{array}}
\newcommand{\ea}{\end{array}}
\newcommand{\Section}[2]{\setcounter{equation}{0}
\allowdisplaybreaks
\section[#1]{#2}}
\def\n{\nabla}
\def\f#1#2{\frac{#1}{#2}}
\def\mc#1{\mathcal{#1}}
\def\pf#1{\frac{\partial}{\partial #1}}
\def\pd#1#2{\frac {\partial #1}{\partial #2}}
\def\a{\alpha}
\def\be{\beta}
\def\p#1{\partial #1}
\def\de{\delta}
\def\De{\Delta}
\def\e{\eta}
\def\ep{\varepsilon}
\def\g{\gamma}
\def\la{\lambda}
\def\om{\omega}
\def\Om{\Omega}
\def\th{\theta}
\def\w{\wedge}
\def\ze{\zeta}
\def\R{\Bbb{R}}
\def\C{\Bbb{C}}
\def\lan{\langle}
\def\ran{\rangle}
\def\ra{\rightarrow}
\subjclass{58E20,53A10.}
\begin{document}
\title
[Berstein type theorems] {Bernstein type theorems for
spacelike stationary graphs in Minkowski spaces}

\author
[Xiang Ma, Peng Wang and Ling Yang]{Xiang Ma, Peng Wang and Ling Yang}
\address{School of Mathematical Sciences, Peking University, Beijing 100871, China}
\email{maxiang@math.pku.edu.cn}
\address {Department of Mathmatics, Tongji University,
Shanghai 200092, China.} \email{netwangpeng@tongji.edu.cn}
\address{School of Mathematical Sciences, Fudan University,
Shanghai 200433, China.} \email{yanglingfd@fudan.edu.cn}
\thanks{Xiang Ma is supported by NSFC Project 11171004; Peng Wang is supported by NSFC Project 11201340; Ling Yang is supported by NSFC Project 11471078}

\begin{abstract}
For entire spacelike stationary 2-dimensional graphs in Minkowski spaces, we establish Bernstein type theorems under specific boundedness assumptions either on the $W$-function or on the total (Gaussian) curvature. These conclusions imply the classical Bernstein theorem for minimal surfaces in $\R^3$ and Calabi's theorem for spacelike maximal surfaces in $\R_1^3$.
\end{abstract}
\maketitle

\Section{Introduction}{Introduction}

The classical Bernstein theorem \cite{be} says that any entire minimal graph in $\R^3$ has to be an affine plane. In other words,
suppose $f:\R^2\ra \R$ is an entire solution to the minimal surface equation
\begin{equation}
\text{div}\left(\f{\n f}{\sqrt{1+|\n f|^2}}\right)=0.
\end{equation}
Then $f$ has to be affine linear.
This conclusion is generally not true in the higher codimensional case. The simplest counter-example is the minimal graph $M=\text{graph }f:=\{(x,f(x)):x\in \C\}\subset \R^4$ of an arbitrary nonlinear holomorphic function $f:\C\ra \C$.

To find a suitable generalization, usually we have to add some boundedness assumptions on the growth rate of the function $f$.
Chern-Osserman \cite{c-o} obtained such a weak version of Bernstein type theorem as follows.
Let $f=(f_1,\cdots,f_m)$ be a smooth vector-valued function from
$\R^2$ to $\R^m$. If $M=\text{graph }f:=\{(x,f(x)):x\in \R^2\}$ is a minimal graph, and
\begin{equation}
W:=\left[\det\left(\de_{ij}+\sum_{1\leq \a\leq m} \pd{f_\a}{x_i}\pd{f_\a}{x_j}\right)\right]^{1/2}
\end{equation}
is uniformly bounded, then $M$ has to be an affine plane.

This $W$-function is a significant quantity for various reasons.

Firstly, for any $f:\R^2\mapsto \R^m$ and its graph, denote the metric on $\text{graph }f$ as $g=g_{ij}dx_idx_j$ under the
global coordinate chart $x=(x_1,x_2)\mapsto (x,f(x))\in \text{graph }f$, then the area element is given by $Wdx_1\w dx_2$. Thus $W$ is a geometric measure of the area growth of the graph of $f$.

Secondly, Chern-Osserman's theorem can be stated in the language of PDE as below. Namely, the entire solution to the following PDE system
\begin{equation}\label{PDE}\aligned
\sum_{1\leq i\leq 2} \pf{x_i}(Wg^{ij})&=0\qquad j=1,2\\
\sum_{1\leq i,j\leq 2}\pf{x_i}\left(Wg^{ij}\pd{f_\a}{x_j}\right)&=0\qquad \a=1,\cdots,m
\endaligned\end{equation}
has to be affine linear provided that $W\leq C$ for a positive constant $C$, where
\begin{equation}\label{metric}
(g_{ij}):=I_2+J_f^T E J_f
\end{equation}
($I_2, E$ denote the identity matrices of size $2$ and $m$ separately, $J_f:=(\pd{f^\a}{x_i})$ is the Jacobian of $f$), $(g^{ij}):=(g_{ij})^{-1}$ and $W=\det(g_{ij})^{1/2}$.
A key point from the analytic viewpoint is that the boundedness of $W$ ensures that (\ref{PDE}) is a uniformly elliptic PDE system.

For more work on the generalization of Chern-Osserman's theorem in relation with the $W$-function, see \cite{b}, \cite{fc}, \cite{j-x-y1} and \cite{j-x-y2}.

Now we consider entire spacelike stationary graphs in Minkowski spaces.
 They also correspond to solutions to (\ref{PDE}), with the differences being that $f=(f_1,\cdots,f_m)$ is now from $\R^2$ to the $m$-dimensional Minkowski space $\R_1^m$, and that $E$ appearing in \eqref{metric} should be replaced by the Minkowski inner product matrix $\mathrm{diag}(1,1,\cdots,1,-1)$. Here we need to assume that
$(g_{ij})$ is positive-definite everywhere.

When $m=1$, $M$ becomes a spacelike maximal graph in $\R_1^3$, which has to be an affine plane. This is a well-known Berntein type result by E. Calabi \cite{ca}.
But for higher codimensional cases, the Bernstein type result fails to be true even if the $W$-function is uniformly bounded. Such a counterexample can be found in \cite{m-w-w} which is given by the function
$$f(x_1,x_2)=\left(2\sinh(x_1)\cos(-\f{\sqrt{2}}{2}x_2),
2\cosh(x_1)\cos(-\f{\sqrt{2}}{2}x_2)\right).$$
So it is a more subtle problem  about the value distribution of the $W$-function for entire spacelike stationary graphs in Minkowski spaces. This is the main topic of the present paper.

As the first step, we generalize Osserman's result in \S 5 of \cite{o} to entire spacelike stationary graphs in the Minkowski space. They are still conformally equivalent to
the complex plane (see Theorem \ref{iso}), and have an explicit simple representation formula. Based on these formulas, we establish the following results:

1) Let $M$ be an entire spacelike stationary
graph in $\R_1^4$,  then the $W$-function is either constant, or takes each values in $[r^{-1},r]$ infinitely often, where
$r$ can be any positive number strictly bigger than $1$. Moreover, $W\equiv \text{const}$ if and only if $M$ is a flat surface
(see Theorem \ref{t1}).

2) For any entire spacelike stationary
graph $M$ in $\R_1^4$, If $W\leq 1$ (or $W\geq 1$) always holds true on $M$, then $M$ has to be flat (see Corollary \ref{ber1}). Note that Calabi's theorem \cite{ca} and the classical Bernstein theorem \cite{be}
can be easily deduced from the above 2 conclusions, respectively.

3) For any entire spacelike stationary
graph $M$ in $\R_1^n (n\ge 4)$, if $W\leq 1$, then $M$ must be flat (see Theorem
\ref{ber2}). (On the contrary, the same conclusion does not necessarily hold true in the case $W\geq 1$; see Proposition \ref{ber3}.)

Another measure of the complexity of a complete stationary surface is its total Gaussian curvature $\int_M |K| dM$. It is closely related with its end behavior at the infinity (see the generalized Jorge-Meeks formula in \cite{m-w-w}).
Using the Weierstrass representation formula given in \cite{m-w-w}, one can compute the integral of the Gauss curvature and the normal curvature of an arbitrary
spacelike stationary surface in $\R_1^4$. A Bernstein type theorem (Theorem \ref{ftc}) follows immediately, which states that an entire spacelike stationary graph in $\R_1^4$
has to be flat, provided that $\int_M |K|dM<+\infty$. (This result cannot be generalized to higher codimensional cases.)

\Section{Entire graphs in Minkowski spaces and the $W$-function}{Entire graphs in Minkowski spaces and the $W$-function}\label{eg}

 Let $\R_1^m$ denote the $m$-dimensional Minkowski space. For any $\mathbf{u}=(u_1,\cdots,u_{m-1},u_m)$, $\mathbf{v}=(v_1,\cdots,v_{m-1},v_m)\in
\R_1^m$, the Minkowski inner product is given by
\begin{equation}
\lan \mathbf{u},\mathbf{v}\ran=u_1v_1+\cdots+u_{m-1}v_{m-1}-u_mv_m.
\end{equation}
Let $f: \R^2\ra \R_1^m$
\begin{equation}
(x_1,x_2)\mapsto f(x_1,x_2)=(f_1(x_1,x_2),\cdots,f_m(x_1,x_2))
\end{equation}
be a smooth vector-valued function.
As in \S 3 of \cite{o}, we introduce the
vector notation
\begin{equation}
p:=\pd{f}{x_1},\qquad q:=\pd{f}{x_2}.
\end{equation}
Let $M=\text{graph }f:=\{(x,f(x)):x\in \R^2\}$ be the entire graph in $\R_1^{2+m}$ generated by $f$, then the metric on $M$ is
\begin{equation}
g=g_{ij}dx_idx_j
\end{equation}
with
\begin{equation}
g_{11}=1+\lan p,p\ran,\quad g_{22}=1+\lan q,q\ran,\quad g_{12}=g_{21}=\lan p,q\ran.
\end{equation}

According to the properties of positive-definite matrices, $M$ is a spacelike surface if and only if $1+\lan p,p\ran>0$ and $\det(g_{ij})>0$.
Hence
\begin{equation}
W=\det(g_{ij})^{1/2}>0
\end{equation}
for any spacelike graph.

 Denote by
$\mc{P}_0$ the orthogonal projection of $\R_1^{2+m}$ onto $\R^2$, then $w:=W^{-1}$ is equivalent to the Jacobian determinant of
$\mc{P}_0|_M$. Thus $W\leq 1$ ($\equiv 1,\ \geq 1$) is equal to saying that $\mc{P}_0|_M$ is an area-increasing (area-preserving, area-decreasing) map.

For entire graphs in the Euclidean space, it is well-known that the orthogonal projection onto the coordinate plane is a length-decreasing map, which becomes
an isometry if and only if the graph is parallel to the coordinate plane. Therefore every entire graph in the Euclidean space must be complete.
But the following examples shows the above properties cannot be generalized to entire graphs in Minkowski spaces.

\textbf{Examples:}
\begin{itemize}
\item Let $\mathbf{y}_0$ be a non-zero light-like vector in $\R_1^m$, $h$ be a smooth real-valued function on $\R^2$ and $f:=h\mathbf{y}_0$,
then $p=\pd{h}{x_1}\mathbf{y}_0, q=\pd{h}{x_2}\mathbf{y}_0$ and hence $g_{ij}=\de_{ij}$, which implies the projection of $M=\text{graph }f$ onto $\R^2$
is an isometry, but $M$ cannot be an affine plane of $\R_1^{2+m}$ whenever $h$ is nonlinear.

\item Let $t\in \R\mapsto \th(t)\in (-\pi/2,\pi/2)$ be a smooth odd function, which satisfies $\lim_{t\ra +\infty}\th(t)=\pi/2$ and $\pi/2-\th(t)=O(t^{-2})$.
Denote
$$h(t):=\int_0^t \sin(\th(t))dt,$$
then $h$ is a smooth even function on $\R$. Define
$$f(x_1,x_2)=(0,\cdots,0,h(r))\qquad (r=\sqrt{x_1^2+x_2^2}),$$
then $p=\pd{f}{x_1}=(0,\cdots,0,\f{h'(r)x_1}{r})$, $q=\pd{f}{x_2}=(0,\cdots,0,\f{h'(r)x_2}{r})$ and hence
$$\aligned
g_{11}&=1+\lan p,p\ran=1-\f{h'(r)^2 x_1^2}{r^2}\geq 1-h'(r)^2=\cos^2(\th(t))>0,\\
\det(g_{ij})&=\det\left(\begin{array}{cc}
1-\f{h'(r)^2 x_1^2}{r^2} & -\f{h'(r)^2 x_1x_2}{r^2}\\
-\f{h'(r)^2x_1x_2}{r^2} & 1-\f{h'(r)^2x_2^2}{r^2}.
\end{array}\right)=1-h'(r)^2>0.
\endaligned$$
Therefore $M=\text{graph }f$ is an entire spacelike graph. Denote $\g: t\in \R\mapsto (t,0,f(t,0))$, then $\g$ is a smooth curve in $M$ tending to infinity. Since
$f(t,0)=(0,\cdots,0,h(t))$,
$$L(\g)=\int_{-\infty}^{+\infty} \sqrt{1-h'(t)^2}dt=\int_{-\infty}^{+\infty}\cos(\th(t))dt.$$
When $t\ra \infty$, $\cos(\th(t))\sim \pi/2-|\th(t)|\sim |t|^{-2}$, therefore $L(\g)<+\infty$ and hence $M$ cannot be complete.

\end{itemize}
\bigskip
\Section{Isothermal parameters of spacelike stationary graphs}{Isothermal parameters of spacelike stationary graphs}

Let $\mathbf{x}:M\ra \R_1^{2+m}$ be a spacelike surface in the Minkowski space. If the mean curvature vector field $\mathbf{H}$ vanishes everywhere, then $M$ is said to be \textit{stationary}. $M$ is stationary if and only if the restriction of any coordinate function on $M$ is harmonic. Namely,
 $\De x_l\equiv 0$ for each $1\leq l\leq 2+m$, with $\De$ the Laplace-Beltrami operator with respect to the induced metric on $M$ (see \cite{m-w-w}).

Now we additionally assume $M$ to be an entire graph over $\R^2$. More precisely, there exists $f:\R^2\ra \R_1^m$, such that $M=\text{graph }f=:\{(x,f(x)):x\in \R^2\}$.
The denotation of $p,q,g_{ij},W$ is same as in Section \ref{eg}. For an arbitrary smooth function $F$ on $M$,
\begin{equation}
\De F=W^{-1}\p_i(Wg^{ij}\p_j F),
\end{equation}
where
\begin{equation}
(g^{ij})=(g_{ij})^{-1}=W^{-2}\left(\begin{array}{cc}
1+\lan q,q\ran & -\lan p,q\ran\\
-\lan p,q\ran & 1+\lan p,p\ran
\end{array}\right).
\end{equation}
The stationarity of $M$ implies $x_1,x_2$ are both harmonic functions on $M$, hence
\begin{equation}\aligned
0&=W\De x_1=\p_i (Wg^{ij}\p_j x_1)\\
&=\p_i(Wg^{ij}\de_{1j})=\p_i (Wg^{i1})\\
&=\pf{x_1}\left(\f{1+\lan q,q\ran}{W}\right)-\pf{x_2}\left(\f{\lan p,q\ran}{W}\right)
\endaligned
\end{equation}
and similarly
\begin{equation}\aligned
0&=W\De x_2=\p_i (Wg^{i2})\\
&=-\pf{x_1}\left(\f{\lan p,q\ran}{W}\right)+\pf{x_2}\left(\f{1+\lan p,p\ran}{W}\right).
\endaligned
\end{equation}
The above 2 equations implies the existence of smooth functions $\xi_1$ and $\xi_2$, such that
\begin{equation}\aligned
\pd{\xi_1}{x_1}&=\f{1+\lan p,p\ran}{W},\qquad \pd{\xi_1}{x_2}=\f{\lan p,q\ran}{W},\\
\pd{\xi_2}{x_1}&=\f{\lan p,q\ran}{W}, \qquad \pd{\xi_2}{x_2}=\f{1+\lan q,q\ran}{W}.
\endaligned
\end{equation}

As in \S 5 of \cite{o}, one can define the Lewy's transformation $L: (x_1,x_2)\in \R^2\ra (\eta_1,\eta_2)\in \R^2$ by
\begin{equation}
\eta_i=x_i+\xi_i(x_1,x_2)\qquad i=1,2.
\end{equation}
Since the Jacobi matrix of $L$
\begin{equation}
J_L=I_2+\left(\pd{\xi_i}{x_j}\right)=I_2+W^{-1}(g_{ij})
\end{equation}
is positive-definite, $L$ is a local diffeomorphism. Again based on the fact that $\big(\pd{\xi_i}{x_j}\big)$ is positive-definite, one can proceed as in
\cite{le} or \S 5 of \cite{o} to show that $L$ is length-increasing, thus $L$ is injective. Let $\Om$ be the image of $L$, then $\Om$ is open. If $\Om\neq \R^2$,
take $\eta$ in the complement of $\Om$ that is nearest to $L(0)$, and find a sequence of points $\{\eta^{(k)}:k\in \Bbb{Z}^+\}$, such that
$|\eta^{(k)}-L(0)|<|\eta-L(0)|$ and $\lim_{k\ra \infty}\eta^{(k)}=\eta$, then there exists $x^{(k)}\in \R^2$, such that $\eta^{(k)}=L(x^{(k)})$.
Since $L$ is length-increasing, $\{x^{(k)}:k\in \Bbb{Z}^+\}$ lies in a bounded domain of $\R^2$, then there exists an subsequence converging to $x\in \R^2$,
which implies $L(x)=\eta$ and causes a contradiction. Therefore $\Om=\R^2$ and then $L$ is a diffeomorphism of $\R^2$ onto itself.

Denote by $\la_1^2, \la_2^2$ ($\la_1,\la_2>0$) the eigenvalues of $(g_{ij})$, then $W=\det(g_{ij})^{1/2}=\la_1\la_2$ and there exists an orthogonal matrix $O$, such that
$$(g_{ij})=O^T\left(\begin{array}{cc} \la_1^2 & \\ & \la_2^2\end{array}\right)O.$$
Hence
$$\aligned
J_L&=I_2+W^{-1}(g_{ij})=O^T\left(\begin{array}{cc}
1+\f{\la_1}{\la_2} & \\
& 1+\f{\la_2}{\la_1}
\end{array}\right)O\\
&=(\la_1^{-1}+\la_2^{-1})O^T\left(\begin{array}{cc}
\la_1 & \\ & \la_2\end{array}\right)O
\endaligned$$
and furthermore
$$\aligned
d\eta_1^2+d\eta_2^2&=(d\eta_1\ d\eta_2)\left(\begin{array}{c} d\eta_1 \\ d\eta_2\end{array}\right)
=(dx_1\ dx_2)J_L^T J_L\left(\begin{array}{c} dx_1 \\ dx_2\end{array}\right)\\
&=(\la_1^{-1}+\la_2^{-1})^2(dx_1\ dx_2)O^T\left(\begin{array}{cc}
\la_1^2 & \\ & \la_2^2\end{array}\right)O \left(\begin{array}{c} dx_1 \\ dx_2\end{array}\right)\\
&=(\la_1^{-1}+\la_2^{-1})^2(dx_1\ dx_2)(g_{ij}) \left(\begin{array}{c} dx_1 \\ dx_2\end{array}\right)\\
&=(\la_1^{-1}+\la_2^{-1})^2(g_{ij}dx_idx_j),
\endaligned$$
i.e.
\begin{equation}
g=g_{ij}dx_idx_j=(\la_1^{-1}+\la_2^{-1})^{-2}(d\eta_1^2+d\eta_2^2).
\end{equation}
This means that $(\eta_1,\eta_2)$ are global isothermal parameters on $M$.

Denote
\begin{equation}
\ze:=\eta_1+\sqrt{-1}\eta_2
\end{equation}
and
\begin{equation}\label{phi1}
\beta_l:=\pd{x_l}{\ze}=\f{1}{2}\Big(\pd{x_l}{\e_1}-\sqrt{-1}\pd{x_l}{\e_2}\Big)\qquad l=1,\cdots,2+m.
\end{equation}
Then the harmonicity of coordinate functions implies
$$0=\f{\p^2 x_l}{\p \ze\p \bar{\ze}}=\f{\p \beta_l}{\p \bar{\ze}},$$
i.e. $\beta_1,\cdots,\beta_{2+m}$ are all holomorphic functions on $M$. A straightforward calculation shows
$-4\text{Im}(\bar{\beta}_1\beta_2)$ equals the Jacobian of the inverse of Lewy's transformation, which is positive everywhere,
thus $\f{\beta_2}{\beta_1}=\f{\bar{\beta}_1\beta_2}{|\beta_1|^2}$ is an entire function whose imaginary part is always negative.
The classical Liouville's Theorem implies $\f{\beta_2}{\beta_1}\equiv c:=a-b\sqrt{-1}$, where $a,b\in \R$ and $b>0$. In conjunction with
(\ref{phi1}) we get
\begin{equation}\label{eta}
\aligned
\pd{x_2}{\e_1}&=a\pd{x_1}{\e_1}-b\pd{x_1}{\e_2}\\
\pd{x_2}{\e_2}&=b\pd{x_1}{\e_1}+a\pd{x_1}{\e_2}.
\endaligned
\end{equation}
Let $(u_1,u_2)$ be global parameters of $M$, satisfying $x_1=u_1$ and $x_2=au_1+bu_2$. Then (\ref{eta}) tells us
\begin{equation}
\pd{u_2}{\e_1}=-\pd{u_1}{\e_2},\quad \pd{u_2}{\e_2}=\pd{u_1}{\e_1}.
\end{equation}
This means the one-to-one map $(\eta_1,\eta_2)\in \R^2\mapsto (u_1,u_2)\in \R^2$ is bi-holomorphic. Thereby we arrive at the following conclusion:

\begin{thm}\label{iso}
Let $f:\R^2\ra \R_1^m$ be a smooth vector-valued function, such that $M=\text{graph }f:=\{(x,f(x)):x\in \R^2\}$ is a spacelike stationary surface,
then there exists a nonsingular linear transformation
\begin{equation}\label{tran}\aligned
x_1&=u_1\\
x_2&=au_1+bu_2,\quad (b>0)
\endaligned
\end{equation}
such that $(u_1,u_2)$ are global isothermal parameters for $M$.
\end{thm}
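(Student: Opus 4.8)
The plan is to extract global isothermal parameters directly from the harmonicity of the ambient coordinate functions, and then to show that one linear adjustment brings them into the prescribed form \eqref{tran}. First I would record that stationarity of $M$ is equivalent to $\De x_l\equiv 0$ for every coordinate function; in particular $x_1$ and $x_2$ are harmonic. Expanding $W\De x_i=\p_j(Wg^{ji})$ with the explicit inverse metric $(g^{ij})$, the two equations $\De x_1=\De x_2=0$ are precisely the integrability (closedness) conditions guaranteeing the existence of smooth potentials $\xi_1,\xi_2$ with the partial derivatives $\pd{\xi_1}{x_1}=\f{1+\lan p,p\ran}{W}$, $\pd{\xi_1}{x_2}=\f{\lan p,q\ran}{W}$, $\pd{\xi_2}{x_1}=\f{\lan p,q\ran}{W}$, $\pd{\xi_2}{x_2}=\f{1+\lan q,q\ran}{W}$. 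I would then assemble Lewy's transformation $L:(x_1,x_2)\mapsto(\eta_1,\eta_2)$ by $\eta_i:=x_i+\xi_i$.

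Next I would argue that $L$ is a global diffeomorphism of $\R^2$. Its Jacobian is $J_L=I_2+W^{-1}(g_{ij})$; since $(g_{ij})$ is positive-definite (the spacelike hypothesis) and $W>0$, the matrix $J_L$ is positive-definite, so $L$ is a local diffeomorphism. Following Lewy \cite{le} (or \S 5 of \cite{o}), positive-definiteness of $\big(\pd{\xi_i}{x_j}\big)$ forces $L$ to be length-increasing, whence injective; surjectivity follows from the standard properness argument, since a nearest boundary point of the image $\Om$ would be the limit of $L(x^{(k)})$ with the $x^{(k)}$ confined to a bounded set, producing a preimage and a contradiction, so $\Om=\R^2$. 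I expect this surjectivity step to be the main obstacle, as it is exactly where the entire, spacelike and stationary hypotheses must all be used at once.

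With $L$ a global diffeomorphism in hand, I would diagonalize $(g_{ij})=O^T\mathrm{diag}(\la_1^2,\la_2^2)O$ and compute $J_L=(\la_1^{-1}+\la_2^{-1})\,O^T\mathrm{diag}(\la_1,\la_2)O$, so that $d\eta_1^2+d\eta_2^2=(\la_1^{-1}+\la_2^{-1})^2\,g$; this exhibits $(\eta_1,\eta_2)$ as global isothermal parameters on $M$. Setting $\ze:=\eta_1+\sqrt{-1}\,\eta_2$ and $\beta_l:=\pd{x_l}{\ze}$ as in \eqref{phi1}, harmonicity of $x_l$ gives $\p\beta_l/\p\bar\ze=0$, so each $\beta_l$ is holomorphic.

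The final step produces the linear change. I would verify by direct computation that $-4\,\Im(\bar\beta_1\beta_2)$ equals the Jacobian of $L^{-1}$, which is positive everywhere; hence $\beta_2/\beta_1=\bar\beta_1\beta_2/|\beta_1|^2$ is a globally defined holomorphic function with strictly negative imaginary part, and Liouville's theorem forces $\beta_2/\beta_1\equiv c=:a-b\sqrt{-1}$ with $b>0$. Translating this constancy back through \eqref{phi1} yields the relations \eqref{eta}, and defining $(u_1,u_2)$ by $x_1=u_1$, $x_2=au_1+bu_2$ turns them into the Cauchy--Riemann equations $\pd{u_2}{\e_1}=-\pd{u_1}{\e_2}$, $\pd{u_2}{\e_2}=\pd{u_1}{\e_1}$. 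Thus $u_1+\sqrt{-1}\,u_2$ is holomorphic in $\ze$, so $(\eta_1,\eta_2)\mapsto(u_1,u_2)$ is biholomorphic; since a conformal reparametrization of isothermal parameters is again isothermal, $(u_1,u_2)$ are global isothermal parameters related to $(x_1,x_2)$ by the nonsingular linear transformation \eqref{tran}, which completes the proof.
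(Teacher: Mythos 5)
Your proposal is correct and follows essentially the same path as the paper's own argument: harmonicity of $x_1,x_2$ yields the potentials $\xi_1,\xi_2$, Lewy's transformation is shown to be a global diffeomorphism by the same local-diffeomorphism/length-increasing/nearest-point argument, the diagonalization of $(g_{ij})$ exhibits $(\eta_1,\eta_2)$ as isothermal, and Liouville's theorem applied to $\beta_2/\beta_1$ produces the constant $c=a-b\sqrt{-1}$ with $b>0$ defining the linear change \eqref{tran}. No substantive differences to report.
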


Now we introduce the complex coordinate $z:=u_1+\sqrt{-1}u_2$ and denote
\begin{equation}
\alpha=(\alpha_1,\cdots,\alpha_{2+m}):=\pd{\mathbf{x}}{z}=\f{1}{2}\Big(\pd{\mathbf{x}}{u_1}-\sqrt{-1}\pd{\mathbf{x}}{u_2}\Big).
\end{equation}
then $\alpha$ is a holomorphic vector-valued function. The induced metric on $M$ can be written as
\begin{equation*}
\aligned
g&=\lan \pd{\mathbf{x}}{z},\pd{\mathbf{x}}{z}\ran dz^2+\lan \pd{\mathbf{x}}{\bar{z}},\pd{\mathbf{x}}{\bar{z}}\ran d\bar{z}^2+2\lan \pd{\mathbf{x}}{z},\pd{\mathbf{x}}{\bar{z}}\ran|dz|^2\\
&=2\text{Re}\big(\lan \alpha,\alpha\ran dz^2\big)+2\lan \alpha,\bar{\alpha}\ran|dz|^2.
\endaligned
\end{equation*}
Here $|dz|^2:=\f{1}{2}(dz\otimes d\bar{z}+d\bar{z}\otimes dz)=du_1^2+du_2^2$. Since $(u_1,u_2)$ are isothermal parameters for $M$,
\begin{equation}\label{phi2}
\lan \alpha,\alpha\ran=0
\end{equation}
and hence
\begin{equation}
g=2\lan \alpha,\bar{\alpha}\ran |dz|^2.
\end{equation}
Noting that $\alpha_1=\pd{x_1}{z}=\f{1}{2}$, $\alpha_2=\pd{x_2}{z}=\f{1}{2}(a-b\sqrt{-1})=\f{c}{2}$, (\ref{phi2}) equals to say
\begin{equation}\label{phi3}
\alpha_{2+m}^2=\alpha_1^2+\cdots+\alpha_{1+m}^2=\f{1+c^2}{4}+\alpha_3^2+\cdots+\alpha_{1+m}^2.
\end{equation}
Thus
$$\aligned
\lan \alpha,\bar{\alpha}\ran&=|\alpha_1|^2+\cdots+|\alpha_{1+m}|^2-|\alpha_{2+m}|^2\\
&=\f{1+|c|^2}{4}+|\alpha_3|^2+\cdots+|\alpha_{1+m}|^2-\big|\f{1+c^2}{4}+\alpha_3^2+\cdots+\alpha_{1+m}^2\big|\\
&\geq \f{1+|c|^2-|1+c^2|}{4}
\endaligned$$
and moreover
\begin{equation}
g\geq \f{1+|c|^2-|1+c^2|}{2}|dz|^2
\end{equation}
Observing that $1+|c|^2-|1+c^2|>0$ is a direct corollary of $b>0$, we get a conclusion as follows.

\begin{cor}
Let $M=\text{graph }f:=\{(x,f(x)):x\in \R^2\}$ be a spacelike stationary graph generated by $f:\R^2\ra \R_1^m$,
 then the induced metric on $M$ is complete.
\end{cor}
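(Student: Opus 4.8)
The plan is to exploit the lower bound on the induced metric $g$ that has essentially just been derived, together with the fact that the global isothermal coordinate $z$ ranges over the entire complex plane, and then to invoke the standard comparison criterion for completeness. Completeness of $g$ is thus reduced to completeness of the flat Euclidean metric, which is classical.

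First I would record that the displayed inequality immediately preceding the statement already gives $g \geq \f{1+|c|^2-|1+c^2|}{2}|dz|^2 =: c_0|dz|^2$, where $|dz|^2=du_1^2+du_2^2$ is the flat metric in the isothermal chart and $c_0>0$ precisely because $b>0$ (this positivity has been checked just above). Hence $g$ dominates a fixed positive multiple of the flat metric pointwise, and so $L_g(\g)\ge\sqrt{c_0}\,L_0(\g)$ for every rectifiable curve $\g$, where $L_0$ denotes Euclidean length in the $z$-plane.

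Second, and this is the step that needs genuine care, I would verify that the parameter $z=u_1+\sqrt{-1}u_2$ sweeps out all of $\C$. Here I would trace the chain of identifications from the previous section: the Lewy transformation $L$ was shown to be a diffeomorphism of $\R^2$ onto itself, the correspondence $(\eta_1,\eta_2)\mapsto(u_1,u_2)$ is biholomorphic, and the linear transformation (\ref{tran}) relating $(u_1,u_2)$ to $(x_1,x_2)$ is nonsingular (its determinant is $b\neq 0$), hence a bijection of $\R^2$. Since $f$ is an entire graph, $(x_1,x_2)$ ranges over all of $\R^2$, so $(u_1,u_2)$, and therefore $z$, ranges over the whole plane.

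Finally I would conclude by the divergent-curve criterion (equivalently Hopf--Rinow): any curve in $M$ that eventually leaves every compact set corresponds to a divergent curve in the $z$-plane, whose Euclidean length is necessarily infinite, so by the metric domination its $g$-length is at least $\sqrt{c_0}$ times that, hence also infinite; since every divergent curve has infinite length, $(M,g)$ is complete. The only real obstacle is the middle step, namely confirming that the isothermal chart is global rather than a proper subdomain, for otherwise the metric comparison alone would be worthless; but all the required surjectivity is already packaged in the construction of Section \ref{eg} and the proof of Theorem \ref{iso}, so once that is cited the argument is short.
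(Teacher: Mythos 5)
Your proposal is correct and follows exactly the route the paper intends: the corollary is stated immediately after the bound $g\geq \f{1+|c|^2-|1+c^2|}{2}|dz|^2$ precisely because completeness follows from domination of the complete flat metric on the full plane, which is what you argue. Your write-up merely makes explicit the two points the paper leaves implicit (that $z$ ranges over all of $\C$, via the nonsingular linear relation (\ref{tran}) between $(u_1,u_2)$ and $(x_1,x_2)$, and the divergent-curve criterion), so it is the same proof, carefully completed.
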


(\ref{tran}) implies $dx_1\w dx_2=bdu_1\w du_2$, hence
$$\aligned
dM&=2\lan \alpha,\bar{\alpha}\ran du_1\w du_2\\
&=2b^{-1}\lan \alpha,\bar{\alpha}\ran dx_1\w dx_2\\
&=\f{1+|c|^2+4(|\alpha_3|^2+\cdots+|\alpha_{1+m}|^2-|\alpha_{2+m}|^2)}{2b}dx_1\w dx_2
\endaligned$$
with $dM$ the area element of $M$. In other words,
\begin{equation}\label{W}
W=\f{1+|c|^2+4(|\alpha_3|^2+\cdots+|\alpha_{1+m}|^2-|\alpha_{2+m}|^2)}{2b}.
\end{equation}
\bigskip

\Section{On $W$-functions for entire stationary graphs in $\R_1^4$}{On $W$-functions for entire stationary graphs in $\R_1^4$}

\begin{thm}\label{t1}
Let $f:\R^2\ra \R_1^2$ be a smooth function, such that $M=\text{graph }f:=\{(x,f(x)):x\in \R^2\}$ is a spacelike stationary graph,
then one and only one of the following three cases must occur:

(i) $f$ is affine linear and $W\equiv r$, where $r$ is an arbitrary positive constant;

(ii) $f=h\mathbf{y}_0+\mathbf{y}_1$ with $h$ a nonlinear harmonic function on $\R^2$, $\mathbf{y}_0$ a nonzero lightlike vector in $\R_1^2$
and $\mathbf{y}_1$ a constant vector, and
$W\equiv 1$;

(iii) $W$ takes each values in $[r^{-1},r]$ infinitely often, where $r$ is an arbitrary number in $(1,+\infty)$.
\end{thm}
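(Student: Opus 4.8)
The plan is to push everything through the Weierstrass-type representation of Theorem \ref{iso} together with formula (\ref{W}). Since $m=2$, the global isothermal chart $z=u_1+\sqrt{-1}\,u_2$ covers all of $\C$, so we are left with two entire functions $\a_3,\a_4$ constrained only by (\ref{phi3}), which here reads $\a_4^2-\a_3^2=\k$ with $\k:=\f{1+c^2}{4}$, and by (\ref{W}), which reads $W=\f{1+|c|^2+4(|\a_3|^2-|\a_4|^2)}{2b}$. First I would introduce $\phi:=\a_4+\a_3$ and $\psi:=\a_4-\a_3$; then $\phi\psi=\k$ is constant, a one-line computation gives $\Re(\phi\bar\psi)=|\a_4|^2-|\a_3|^2$, and therefore $W=\f{1+|c|^2-4\Re(\phi\bar\psi)}{2b}$. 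The proof then divides according to whether $\k$ vanishes.

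If $\k=0$ then $c^2=-1$, and since $b>0$ rules out $c=\sqrt{-1}$ we must have $c=-\sqrt{-1}$, so $|c|^2=1$ and $b=1$. As $\phi\psi\equiv0$ on the connected plane $\C$, one holomorphic factor vanishes identically; say $\psi\equiv0$, the case $\phi\equiv0$ being symmetric. Then $\Re(\phi\bar\psi)=0$, so $W\equiv1$, and $\a_4=\a_3$ says $\partial(f_2-f_1)/\partial z=0$. Since $f_2-f_1$ is real and harmonic it is constant, whence $f=h\,(1,1)+\mathbf{y}_1$ with $h$ harmonic and $(1,1)$ a lightlike vector of $\R_1^2$. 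This is case (ii) when $h$ is nonlinear, and case (i) (with $r=1$) when $h$ is affine linear.

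If $\k\neq0$ the factors $\phi,\psi$ are zero-free and $\psi=\k/\phi$, so $\Re(\phi\bar\psi)=\Re(\bar\k\,\phi/\bar\phi)=|\k|\cos(2\arg\phi-\arg\k)$, which sweeps out $[-|\k|,|\k|]$. Hence $W$ ranges in $[W_{\min},W_{\max}]$ with $W_{\min}=\f{1+|c|^2-|1+c^2|}{2b}$ and $W_{\max}=\f{1+|c|^2+|1+c^2|}{2b}$, using $4|\k|=|1+c^2|$. Expanding with $c=a-b\sqrt{-1}$ yields $(1+|c|^2)^2-|1+c^2|^2=4b^2$, so $W_{\min}W_{\max}=1$; setting $r:=W_{\max}$ gives $W_{\min}=r^{-1}$, and $\k\neq0$ forces $W_{\max}>W_{\min}>0$, hence $r>1$. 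If $\phi$ is constant then so is $\psi=\k/\phi$, so $\a_3,\a_4$ are constant, $f$ is affine linear and $W$ is constant: case (i). If $\phi$ is nonconstant I would write $\phi=e^{G}$ with $G$ entire nonconstant; for any $w\neq0$ the equation $\phi=w$ means $G\in\{\log w+2k\pi\sqrt{-1}:k\in\Bbb{Z}\}$, and by the Little Picard theorem $G$ omits at most one value from this infinite set, so $\phi$ attains every nonzero value—hence every argument—infinitely often. Consequently $\cos(2\arg\phi-\arg\k)$ realizes each value of $[-1,1]$ infinitely often, and $W$ attains each value of $[r^{-1},r]$ infinitely often: case (iii).

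Finally, the three cases are mutually exclusive—(i) and (ii) are separated by whether $f$ is affine linear, and (iii) is the only alternative with $W$ nonconstant—so exactly one occurs. I expect the one genuinely delicate point to be the value-distribution step for nonconstant $\phi$: one must guarantee that every $W$-value in the closed interval $[r^{-1},r]$, the endpoints included, is assumed infinitely often, and this is precisely what Picard's theorem applied to the zero-free function $\phi$ delivers. The remaining ingredients—the angle and sign bookkeeping and the identity $W_{\min}W_{\max}=1$—are routine algebra in $a$ and $b$.
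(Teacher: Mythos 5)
Your proof is correct and follows essentially the same route as the paper: you factor the conformality relation $\alpha_4^2-\alpha_3^2=\kappa$, exploit the resulting zero-free entire function to apply the little Picard theorem, and close with the identity $(1+|c|^2)^2-|1+c^2|^2=4b^2$ giving $W_{\min}W_{\max}=1$; the paper packages the identical computation as $\alpha_3=\mu\cosh\beta$, $\alpha_4=\mu\sinh\beta$ with $|\cosh\beta|^2-|\sinh\beta|^2=\cos(2\,\text{Im}\,\beta)$, which is your $\Re(\phi\bar\psi)=|\kappa|\cos(2\arg\phi-\arg\kappa)$ in different notation. The only clause you do not address is the word ``arbitrary'' in (i) and (iii): the paper ends its proof by showing that every $r\in(1,+\infty)$ is actually realized by some graph (take $b=1$ and solve $r=\tfrac{1}{2}\left(2+a^2+|a|\sqrt{a^2+4}\right)$ for $a$), whereas you only produce the specific $r=\sup W>1$ attached to the given $f$; if the theorem is read as a pure trichotomy for a fixed $f$, your argument is complete as it stands.
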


\begin{proof}
(\ref{phi2}) is equal to
\begin{equation}\label{phi4}
\alpha_3^2-\alpha_4^2=-(\alpha_1^2+\alpha_2^2)=-\f{1+c^2}{4}
\end{equation}
and (\ref{W}) gives
\begin{equation}\label{W2}
W=\f{1+|c|^2+4(|\alpha_3|^2-|\alpha_4|^2)}{2b}.
\end{equation}

If $\alpha_3$ is a constant function, then (\ref{phi4}) shows $\alpha_4$ is also constant, and
$$x_a(z)=\text{Re}\int_0^z \alpha_a dz+x_a(0)\qquad \forall a=3,4$$
is affine linear. Hence $f$ is affine linear and $W\equiv r$,
where $r$ can be taken to be any value in $(0,\infty)$.  This is Case (i).

Now we assume $\alpha_3$ is non-constant, then (\ref{phi4}) implies $\alpha_4$ is also non-constant.

If $c=-\sqrt{-1}$, then (\ref{phi4}) gives
$$0=\alpha_3^2-\alpha_4^2=(\alpha_3+\alpha_4)(\alpha_3-\alpha_4).$$
Noting that the zeros of a non-constant holomorphic function have to be isolated, we get
$\alpha_3+\alpha_4=0$ or $\alpha_3-\alpha_4=0$. Thus $|\alpha_3|=|\alpha_4|$
and then (\ref{W2}) shows $W\equiv 1$. Let $\beta$ be the unique holomorphic function such that
$\beta'=\alpha_3$ and $\beta(0)=0$, then $\alpha_3\pm \alpha_4=0$ implies
$$\aligned
f(x_1,x_2)&=(x_3(u_1,u_2),x_4(u_1,u_2))=(x_3(z),x_4(z))\\
&=\text{Re}\int_0^z (\alpha_3,\alpha_4)dz+(x_3(0),x_4(0))\\
&=\text{Re }\beta(z)(1,\mp 1)+f(0,0).
\endaligned$$
Now we put $h:=\text{Re }\beta(z)$, $\mathbf{y}_0:=(1,\mp 1)$ and $\mathbf{y}_1:=f(0,0)$, then $h$ is a nonlinear harmonic function,
$\mathbf{y}_0$ is a light-like vector and $f=h\mathbf{y}_0+\mathbf{y}_1$. This is Case (ii).

Otherwise $c\neq -\sqrt{-1}$ and hence $-\f{1+c^2}{4}\neq 0$. Let
$\mu\neq 0$ such that $\mu^2=-\f{1+c^2}{4}$, and $h_1,h_2$ be
holomorphic functions such that $\alpha_3=\mu h_1$, $\alpha_4=\mu
h_2$, then $\mu^2(h_1^2-h_2^2)=\alpha_3^2-\alpha_4^2=\mu^2$ gives
$$1=h_1^2-h_2^2=(h_1+h_2)(h_1-h_2),$$
which implies $h_1+h_2$ is an entire function containing no zero. Hence there exists an entire function $\beta$, such that
$h_1+h_2=e^\beta$, then $h_1-h_2=e^{-\beta}$ and hence
\begin{equation}
h_1=\cosh \beta,\qquad h_2=\sinh \beta.
\end{equation}
By computing,
$$\aligned
&|h_1|^2-|h_2|^2=|\cosh \beta|^2-|\sinh \beta|^2\\
=&\f{1}{2}(e^{\beta-\bar{\beta}}+e^{-\beta+\bar{\beta}})=\f{1}{2}(e^{2\text{Im}\beta\sqrt{-1}}+e^{-2\text{Im}\beta\sqrt{-1}})\\
=&\cos(2\text{Im}\beta)
\endaligned$$
and hence
\begin{equation}
\aligned
W&=\f{1+|c|^2+4(|\alpha_3|^2-|\alpha_4|^2)}{2b}\\
&=\f{1+|c|^2+4|\mu|^2(|h_1|^2-|h_2|^2)}{2b}\\
&=\f{1+|c|^2+|1+c^2|\cos(2\text{Im}\beta)}{2b}.
\endaligned
\end{equation}
Denote $r_1:=\inf W=\f{1+|c|^2-|1+c^2|}{2b}$ and $r_2:=\sup W=\f{1+|c|^2+|1+c^2|}{2b}$.
Due to the Picard theorem, $W$ takes each values in $[r_1,r_2]$ infinitely often. Noting that $c=a-b\sqrt{-1}$,
one computes
$$\aligned
r_1r_2&=\f{(1+|c|^2)^2-|1+c^2|^2}{4b^2}=\f{1+2|c|^2+|c|^4-(1+c^2+\bar{c}^2+|c|^4)}{4b^2}\\
&=\f{4b^2}{4b^2}=1.
\endaligned$$
Hence $r_1\in (0,1)$ and $r_2\in (1,+\infty)$.

Now we take $b:=1$, then $c=a-\sqrt{-1}$ and
$r_2=\f{1}{2}(2+a^2+|a|\sqrt{a^2+4})$. Denote $\mu:t\in \R^+\mapsto \mu(t)=\f{1}{2}(2+t^2+|t|\sqrt{t^2+4})$,
then $\mu$ is a strictly increasing function and $\lim_{t\ra 0}\mu(t)=1$, $\lim_{t\ra +\infty}\mu(t)=+\infty$.
Hence for an arbitrary number $r\in (1,\infty)$, one can find $a\in \R^+$, such that $r_2=r$ and then $W$ takes each values
in $[r^{-1},r]$ infinitely often. This is Case (iii).

\end{proof}

\begin{cor}\label{ber1}
Let $M$ be an entire spacelike stationary graph in $\R_1^{4}$ generated by a smooth function $f=(f_1,f_2):\R^2\ra \R_1^2$, if $W\leq 1$ (resp. $W\geq 1$),
then $f$ is affine linear or $f=h\mathbf{y}_0+\mathbf{y}_1$, with $h$ a nonlinear harmonic function, $\mathbf{y}_0$ a nonzero lightlike
vector and $\mathbf{y}_1$ a constant vector. Moreover, $W>1$ (resp. $W<1$) forces $f$ to be affine linear, representing an affine plane
in $\R_1^4$.

\end{cor}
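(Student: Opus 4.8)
The plan is to read the corollary straight off the trichotomy of Theorem \ref{t1}, which already classifies every spacelike stationary graph $f:\R^2\ra\R_1^2$ into exactly one of the mutually exclusive cases (i), (ii), (iii). Thus no new analysis is needed; the entire task is to test each case against the sign hypothesis on $W$ and discard those that are incompatible.

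First I would rule out case (iii) under either $W\le 1$ or $W\ge 1$. From the proof of Theorem \ref{t1}, in case (iii) one has $\inf W=r_1$ and $\sup W=r_2$ with $r_1r_2=1$, $r_1\in(0,1)$ and $r_2\in(1,+\infty)$; so $W$ necessarily takes values strictly below $1$ as well as strictly above $1$. Consequently neither one-sided bound can hold on all of $M$, and case (iii) is excluded. Only cases (i) and (ii) survive, and these are precisely the two alternatives asserted: $f$ affine linear, or $f=h\mathbf{y}_0+\mathbf{y}_1$ with $h$ nonlinear harmonic, $\mathbf{y}_0$ nonzero lightlike and $\mathbf{y}_1$ constant. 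This proves the first assertion.

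For the \emph{moreover} clause I would additionally eliminate case (ii). Because case (ii) forces $W\equiv 1$, a strict inequality---$W>1$ everywhere (resp. $W<1$ everywhere)---is incompatible with it, exactly as it is with case (iii). Hence only case (i) remains, so $f$ is affine linear and $M$ is an affine plane in $\R_1^4$. I expect no genuine obstacle: the corollary is bookkeeping on top of Theorem \ref{t1}, and the only point needing a moment's care is the observation that in case (iii) the extrema of $W$ sit strictly on opposite sides of $1$, which follows from $r_1r_2=1$ together with $r_1<1<r_2$; this is exactly what lets any one-sided bound push us out of that case.
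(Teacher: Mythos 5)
Your proof is correct and is exactly the argument the paper intends: the paper states Corollary \ref{ber1} without a separate proof precisely because it reads off the trichotomy of Theorem \ref{t1} in the way you describe. The two observations you make --- that in case (iii) the values of $W$ straddle $1$ (since $r^{-1}<1<r$), and that case (ii) has $W\equiv 1$, so any strict inequality leaves only case (i) --- are the only points needed, and you have them both.
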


\noindent \textbf{Remark. }If $f_2\equiv 0$, then $M=\text{graph }f$ is a minimal entire graph in $\R^3$ and $W\geq 1$. By Corollary \ref{ber1},
$f$ is affine linear or $f=h\mathbf{y}_0+\mathbf{y}_1$, where $h$ is a nonlinear harmonic function and $\mathbf{y}_0$ is a nonzero
lightlike vector. But $f_2\equiv 0$ denies the occurence of the latter case. Hence $f$ is an affine linear function and thereby the
classical Bernstein theorem \cite{be} can be derived from Corollary \ref{ber1}. Similarly, Corollary \ref{ber1} implies any
 spacelike maximal entire graph in $\R_1^3$ has to be affine linear. This is Calabi's theorem \cite{ca}.

\Section{Bernstein type theorems for entire stationary graphs in $\R_1^{2+m}$}{Bernstein type theorems for entire stationary graphs in $\R_1^{2+m}$}

It is natural to ask whether one can generalized the conclusion of Corollary \ref{ber1} to higher codimensional cases.

For the first statement, i.e. $W\leq 1$, the answer is ``yes":

\begin{thm}\label{ber2}
Let $f:\R^2\ra \R_1^m$ be a smooth function, such that $M=\text{graph }f:=\{(x,f(x)):x\in \R^2\}$ is a spacelike stationary graph in $\R_1^{2+m}$.
If the orthogonal projection $\mc{P}_0$ of $M$ onto the coordinate plane $\R^2$ is area-increasing (i.e. $W\leq 1$), then $f$ is affine linear or $f=h\mathbf{y}_0+\mathbf{y}_1$, with $h$ a nonlinear harmonic function, $\mathbf{y}_0$ a nonzero lightlike
vector and $\mathbf{y}_1$ a constant vector. Moreover, if $\mc{P}_0$ is strictly area-increasing (i.e. $W<1$), then $f$ has to be affine linear and $M$
is an affine plane.
\end{thm}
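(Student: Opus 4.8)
The plan is to feed the hypothesis $W\le 1$ into the Weierstrass-type data furnished by Theorem \ref{iso}, and then run a Liouville/open-mapping argument parallel to the proof of Theorem \ref{t1}. First I would rewrite the assumption: using $|c|^2=a^2+b^2$ (recall $c=a-b\sqrt{-1}$ with $b>0$) together with the area formula (\ref{W}), one finds
\[
W\le 1 \iff |\alpha_3|^2+\cdots+|\alpha_{1+m}|^2-|\alpha_{2+m}|^2\le -\f{a^2+(b-1)^2}{4}\le 0.
\]
Combined with the isothermal relation (\ref{phi3}), namely $\alpha_{2+m}^2=\f{1+c^2}{4}+\alpha_3^2+\cdots+\alpha_{1+m}^2$, this is the entire analytic input, and I would split the argument according to whether $1+c^2=0$, exactly mirroring Theorem \ref{t1}.

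In the case $1+c^2\ne 0$, I claim $\alpha_{2+m}$ has no zeros: at a zero the displayed inequality forces $\alpha_3=\cdots=\alpha_{1+m}=0$ as well, whereupon (\ref{phi3}) yields $\f{1+c^2}{4}=0$, a contradiction. Since $\alpha_{2+m}$ is then a zero-free entire function, each $g_j:=\alpha_j/\alpha_{2+m}$ $(3\le j\le 1+m)$ is entire with $|g_j|^2\le \big(\sum_k|\alpha_k|^2\big)/|\alpha_{2+m}|^2\le 1$, so Liouville forces $g_j\equiv c_j$ constant. Substituting $\alpha_j=c_j\alpha_{2+m}$ into (\ref{phi3}) gives $\alpha_{2+m}^2\big(1-\sum_j c_j^2\big)=\f{1+c^2}{4}\ne 0$, so $\alpha_{2+m}^2$ is a nonzero constant; hence every $\alpha_l$ is constant and $f$ is affine linear.

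In the case $c=-\sqrt{-1}$ (so $a=0$, $b=1$), relation (\ref{phi3}) becomes $\alpha_{2+m}^2=\sum_{j=3}^{1+m}\alpha_j^2$ and the inequality becomes $\sum_j|\alpha_j|^2\le|\alpha_{2+m}|^2$; chaining them,
\[
\sum_j|\alpha_j|^2\le |\alpha_{2+m}|^2=\Big|\sum_j\alpha_j^2\Big|\le \sum_j|\alpha_j^2|=\sum_j|\alpha_j|^2,
\]
so equality holds throughout and in particular $W\equiv 1$. The heart of the matter is to exploit $\big|\sum_j\alpha_j^2\big|=\sum_j|\alpha_j^2|$: it says the nonzero terms $\alpha_j^2$ share a common argument at every point, whence — by the open mapping theorem (a holomorphic function with real values is constant) together with the identity theorem — each ratio $\alpha_j^2/\alpha_k^2$ is a nonnegative real constant. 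Consequently there is a single holomorphic function $\alpha$ and a fixed real vector $\mathbf{v}=(v_3,\ldots,v_{1+m})$ with $(\alpha_3,\ldots,\alpha_{1+m})=\alpha\,\mathbf{v}$, and then $\alpha_{2+m}=\pm|\mathbf{v}|\,\alpha$. Setting $\mathbf{y}_0:=(v_3,\ldots,v_{1+m},\pm|\mathbf{v}|)\in\R_1^m$ one checks $\lan \mathbf{y}_0,\mathbf{y}_0\ran=|\mathbf{v}|^2-|\mathbf{v}|^2=0$, so $\mathbf{y}_0$ is lightlike, while $\pd{f}{z}=\alpha\,\mathbf{y}_0$; integrating gives $f=h\,\mathbf{y}_0+\mathbf{y}_1$ with $h$ harmonic and $\mathbf{y}_1$ constant, nonlinear exactly when $\alpha$ is non-constant. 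This equality-case analysis is the step I expect to be the main obstacle, since it is where higher codimension departs from the explicit $\cosh$/$\sinh$ description available when $m=2$.

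Finally, for the strict statement: if $W<1$ everywhere, the second case is impossible (there $W\equiv 1$), so only the first case survives and $f$ is affine linear, representing an affine plane. This yields the ``moreover'' assertion.
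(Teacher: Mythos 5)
Your proof is correct. Its second case ($c=-\sqrt{-1}$) is essentially the paper's Case IV: the same triangle-inequality chain forcing $W\equiv 1$, the same equality analysis showing the ratios of the $\alpha_j^2$ are nonnegative real constants via the open mapping/identity theorems, and the same integration to $f=h\mathbf{y}_0+\mathbf{y}_1$. Where you genuinely diverge is the non-degenerate case $1+c^2\neq 0$. The paper splits this into three sub-cases according to which components of $\alpha$ are constant (its Cases I--III) and argues by contradiction, using value-distribution consequences of Liouville's theorem (a non-constant entire $\alpha_l$ is unbounded; a non-constant $\alpha_{2+m}$ takes values arbitrarily close to $0$) to manufacture a point where $W>1$, contradicting $W\le 1$. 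You instead prove constancy directly: $W\le 1$ forces $\alpha_{2+m}$ to be zero-free, so the quotients $g_j=\alpha_j/\alpha_{2+m}$ are bounded entire functions, hence constants by Liouville, and substitution back into (\ref{phi3}) makes $\alpha_{2+m}^2$ a nonzero constant, so every $\alpha_l$ is constant. Your route avoids the proof by contradiction, the sub-case split on constancy, and the paper's ad hoc thresholds ($b-\f{1}{4}(1+|c|^2)$ in Case II, $\f{1}{2}b\de$ in Case III); the paper's route in exchange localizes the failure of the hypothesis (explicit points with $W\ge 2$, resp.\ $W>1$), which is slightly more informative but not needed for the theorem. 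Two small points you should make explicit in your case $c=-\sqrt{-1}$: fix a denominator $\alpha_{k_0}\not\equiv 0$ before forming the ratios (if $\alpha_j\equiv 0$ for all $3\le j\le 1+m$, then (\ref{phi3}) gives $\alpha_{2+m}\equiv 0$ and $f$ is constant, hence affine), and note that the sign in $\alpha_j=\pm\sqrt{t_j}\,\alpha_{k_0}$ is globally constant by the identity theorem, which you implicitly use when assembling the fixed vector $\mathbf{v}$; both are routine.
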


\begin{proof}
We shall consider the problem in the following four cases.

\textbf{Case I.} $\alpha_1,\cdots,\alpha_{2+m}$ are all constant functions.

As in the proof of Theorem \ref{t1}, one can show $f$ is an affine linear function.

\textbf{Case II.} $\alpha_{2+m}$ is a constant function, but $\alpha_l$ is non-constant for some $1\leq l\leq 1+m$.

By the classical Liouville Theorem, there exists a point in $\C$, such that $|\alpha_{l}|^2\geq |\alpha_{2+m}|^2+b-\f{1}{4}(1+|c|^2)$
at this point. Combing with (\ref{W}) gives
$$\aligned
W&=\f{1+|c|^2+4(|\alpha_3|^2+\cdots+|\alpha_{1+m}|^2-|\alpha_{2+m}|^2)}{2b}\\
&\geq \f{1+|c|^2+4(|\alpha_l|^2-|\alpha_{2+m}|^2)}{2b}\geq 2.
\endaligned$$
This gives a contradiction to the assumption that $W\leq 1$ everywhere. Hence the case cannot occur.

\textbf{Case III.} $\alpha_{2+m}$ is non-constant and $c\neq
-\sqrt{-1}$.

$c\neq \sqrt{-1}$ implies
$$\f{1+|c|^2}{2b}=\f{1+b^2+a^2}{2b}>1.$$
Denote $\de:=\f{1+|c|^2}{2b}-1$. Again the classical Liouville Theorem implies the existence of a point, such that
$|\alpha_{2+m}|^2<\f{1}{2}b\de$ at this point. Hence
$$\aligned
W&=\f{1+|c|^2+4(|\alpha_3|^2+\cdots+|\alpha_{1+m}|^2-|\alpha_{2+m}|^2)}{2b}\\
&\geq \f{1+|c|^2-4|\alpha_{2+m}|^2}{2b}>1+\de-\f{4\cdot \f{1}{2}b\de}{2b}=1,
\endaligned$$
which causes a contradiction and therefore this case cannot happen.

\textbf{Case IV.} $\alpha_{2+m}$ is non-constant and $c=-\sqrt{-1}$.

Let $h_3,\cdots,h_{1+m}$ be meromorphic functions, such that
$$\alpha_3^2=h_3\alpha_{2+m}^2,\cdots,\alpha_{1+m}^2=h_{1+m}\alpha_{2+m}^2.$$
Then (\ref{phi3}) tells us
$$\aligned
\alpha_{2+m}^2&=\f{1+c^2}{4}+\alpha_3^2+\cdots+\alpha_{1+m}^2=\alpha_3^2+\cdots+\alpha_{1+m}^2\\
&=(h_3+\cdots+h_{1+m})\alpha_{2+m}^2.
\endaligned$$
Since $\alpha_{2+m}$ is a non-constant function, we have
$$h_3+\cdots+h_{1+m}\equiv 1.$$
Due to the triangle inequality,
$$\aligned
W&=\f{1+|c|^2+4(|\alpha_3|^2+\cdots+|\alpha_{1+m}|^2-|\alpha_{2+m}|^2)}{2b}\\
&=1+2(|\alpha_3^2|+\cdots+|\alpha_{1+m}^2|-|\alpha_{2+m}^2|)\\
&=1+2(|h_3|+\cdots+|h_{1+m}|-1)|\alpha_{2+m}|^2\geq 1
\endaligned$$
and the equality holds if and only if the functions $h_3,\cdots,h_{1+m}$ all take values in $\R^+\cup \{0,\infty\}$.
Again using the Liouville Theorem, we know that $h_3,\cdots,h_{1+m}$ are all constant real functions. Therefore, there
exist $v_3,\cdots,v_{1+m}\in \R$, such that $v_3^2+\cdots+v_{1+m}^2=1$ and
$$(\alpha_3,\cdots,\alpha_{1+m},\alpha_{2+m})=(v_3,\cdots,v_{1+m},1)\alpha_{2+m}.$$
Let $\beta$ be the unique holomorphic function such that $\beta'=\alpha_{2+m}$ and $\beta(0)=0$. Denote $h:=\text{Re }\beta$,
$\mathbf{y}_0:=(v_3,\cdots,v_{1+m},1)$ and $\mathbf{y}_1:=f(0,0)$, then $h$ is a non-linear harmonic function and
$\mathbf{y}_0$ is a light-like vector. We can proceed as in the proof of Theorem \ref{t1} to show $f=h\mathbf{y}_0+\mathbf{y}_1$.
Note that in this case $W\equiv 1$.

\end{proof}

But our answer is ``no" for the second statement, i.e. $W\geq 1$. In fact, we have the following result:

\begin{pro}\label{ber3}
For any real number $C\geq 1$ and $\ep>0$, there exists an entire spacelike stationary graph in $\R_1^{2+m}$ ($m\geq 3$) generating by
$f:\R^2\ra \R_1^m$, such that $\inf W\cdot \sup W=C$ and $0<\sup W-\inf W<\ep$.
\end{pro}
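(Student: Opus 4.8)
The plan is to produce the examples explicitly through the holomorphic representation developed in Section 3, exploiting the fact that when $m\ge 3$ there is one spacelike slot to spare beyond the pair that drives the oscillation of $W$. I would work with the converse of Theorem \ref{iso}: given entire functions $\alpha_1,\dots,\alpha_{2+m}$ with $\alpha_1=\f12$, $\alpha_2=\f c2$, satisfying $\lan\alpha,\alpha\ran=0$ (i.e. \eqref{phi3}) and $\lan\alpha,\bar\alpha\ran>0$, the functions $x_l:=2\Re\int_0^z\alpha_l\,dz$ (with $z=u_1+\sqrt{-1}u_2$) are harmonic, hence define a stationary surface by the characterization at the start of Section 3; spacelikeness is $\lan\alpha,\bar\alpha\ran>0$, equivalently $W>0$. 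Taking $a=0$ forces $x_1=u_1$ and $x_2=bu_2$, so $(u_1,u_2)\mapsto(x_1,x_2)$ is a global linear diffeomorphism of $\R^2$ and the surface is an entire graph, whose $W$-function is given by \eqref{W}.

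Concretely I would set $a=0$, so $c=-b\sqrt{-1}$ with $b\in(0,\sqrt{C})$ a parameter to be fixed at the end, choose the real constant $\kappa:=\f{\sqrt{C-1}}2$, and put $\nu^2:=\f{1+c^2}4+\kappa^2=\f{C-b^2}4>0$. Then I prescribe $\alpha_3:=\kappa$, $\alpha_4:=\nu\sinh z$, $\alpha_{2+m}:=\nu\cosh z$, and $\alpha_5=\dots=\alpha_{1+m}:=0$. Since $\cosh^2 z-\sinh^2 z=1$, one checks immediately that $\alpha_{2+m}^2=\f{1+c^2}4+\alpha_3^2+\dots+\alpha_{1+m}^2$, which is exactly \eqref{phi3}; this is where $m\ge 3$ enters, because the construction must accommodate simultaneously a constant spacelike slot ($\alpha_3=\kappa$) and a hyperbolic pair ($\alpha_4,\alpha_{2+m}$), which needs at least two spacelike directions beyond $\alpha_1,\alpha_2$.

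Computing $W$ then follows the Case (iii) pattern of Theorem \ref{t1}: from $|\cosh z|^2-|\sinh z|^2=\cos(2\Im z)$ together with \eqref{W} (and $|c|^2=b^2$, $1+4\kappa^2=C$) one gets $W=A-B\cos(2u_2)$ with $A=\f{C+b^2}{2b}$ and $B=\f{C-b^2}{2b}$. Because $\Im z=u_2$ sweeps all of $\R$, $\cos(2u_2)$ attains both endpoints of $[-1,1]$, so $\inf W$ and $\sup W$ are realized; in particular $W\ge A-B=b>0$, which confirms the graph is spacelike. The algebra then collapses cleanly to
\[
\inf W=A-B=b,\qquad \sup W=A+B=\f Cb,\qquad \inf W\cdot\sup W=C,
\]
so the prescribed product holds for \emph{every} admissible $b$.

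Finally, $\sup W-\inf W=\f Cb-b=\f{C-b^2}b$, which is strictly positive for $b<\sqrt C$ and tends to $0^+$ as $b\to\sqrt C^-$; choosing any $b\in(0,\sqrt C)$ with $C-b^2<\ep\,b$ (equivalently $b>\f{-\ep+\sqrt{\ep^2+4C}}2$, which is compatible with $b<\sqrt C$) yields $0<\sup W-\inf W<\ep$, completing the construction for all $m\ge3$ after zeroing the unused coordinates. I expect no genuine analytic obstacle here; the work is organizational, namely verifying that the prescribed data satisfy conformality and spacelikeness so the converse representation applies and $M$ is an entire graph, and observing that the product $A^2-B^2$ is forced to equal $1+4\kappa^2=C$ independently of $b$ while the oscillation $2B=\f{C-b^2}b$ remains a freely adjustable small parameter. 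The conceptual point to highlight is that it is precisely the extra constant direction $\kappa$, unavailable in codimension two, that decouples the product $\inf W\cdot\sup W$ from the value $1$ dictated by Theorem \ref{t1}.
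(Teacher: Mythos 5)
Your construction is correct and is essentially the paper's own proof: the paper likewise prescribes the holomorphic data $\alpha_1=\f{1}{2}$, $\alpha_2=\f{c}{2}$ with $c=-b\sqrt{-1}$, a constant spacelike component $d/2$ with $d=\sqrt{C-1}$ (your $\kappa$), a hyperbolic pair $\mu\cosh z,\ \mu\sinh z$, then computes $\inf W\cdot \sup W=1+d^2=C$ and $\sup W-\inf W=|C-b^2|/b$ and lets $b$ tend to $\sqrt{C}$. The only cosmetic difference is that the paper's $\mu$ is purely imaginary (with $\cosh$ in the spacelike slot and $\sinh$ in the timelike one), which coincides with your real-$\nu$ data after the translation $z\mapsto z+\sqrt{-1}\,\pi/2$.
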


\begin{proof}
Now we put $c:=-b\sqrt{-1}$ and let $d$ be a real number to be chosen. Let $\mu$ be a complex number such that
$$\mu^2=-\f{1+c^2+d^2}{4}=-\f{1-b^2+d^2}{4}.$$
Denote
$$\aligned
&\alpha_1=\f{1}{2},\alpha_2=\f{c}{2}=-\f{b}{2}\sqrt{-1},\alpha_3=\cdots=\alpha_{m-1}=0,\\
&\alpha_m=\f{d}{2},\alpha_{1+m}=\mu \cosh z,\alpha_{2+m}=\mu \sinh z.
\endaligned$$
Since
$$\lan \alpha,\alpha\ran=\alpha_1^2+\alpha_2^2+\alpha_m^2+\alpha_{1+m}^2-\alpha_{2+m}^2=0$$
and $\lan \alpha,\bar{\alpha}\ran>0$, $z\mapsto \mathbf{x}(z)=\int_0^z \alpha(z)$ gives an entire spacelike stationary graph in $\R_1^{2+m}$.

As in the proof of Theorem \ref{t1}, a similar calculation shows
$$\aligned
W&=\f{1+|c|^2+4(|\alpha_3|^2+\cdots+|\alpha_{1+m}|^2-|\alpha_{2+m}|^2)}{2b}\\
&=\f{1+b^2+d^2+|1-b^2+d^2|\cos(2\text{Im}z)}{2b}
\endaligned$$
Denote $r_1:=\inf W$, $r_2:=\sup W$, then $r_1=\f{1+b^2+d^2-|1-b^2+d^2|}{2b}$,
$r_2=\f{1+b^2+d^2+|1-b^2+d^2|}{2b}$ and
$$\aligned
&r_1r_2=\f{(1+b^2+d^2)^2-(1-b^2+d^2)^2}{4b^2}=1+d^2,\\
&r_2-r_1=\f{|1-b^2+d^2|}{b}.
\endaligned$$
Now we put $d:=\sqrt{C-1}$, then $r_1r_2=C$, and one can choose $b$ being sufficiently close to $\sqrt{C}$, such that
$r_2-r_1=\f{|1-b^2+d^2|}{b}=\f{|C-b^2|}{b}\in (0,\ep)$.

\end{proof}

\Section{Stationary graphs with finite total curvature}{Stationary graphs with finite total curvature}

As demonstrated in \cite{m-w-w}, the Bernstein theorem can not be generalized directly to stationary graphs in $\R^4_1$, because one can easily construct complete stationary graphs in $\R^4_1$ which is not flat. Interestingly these examples have infinite total curvature.

On the other hand, examples of complete stationary surfaces with finite total curvature are abundant, and there holds a generalized Jorge-Meeks formula about their total Gaussian curvature (and the total normal curvature) provided that they are algebraic \cite{m-w-w}. Thus one is naturally interested to know whether there could be a stationary graph with finite total curvature. The answer to this question is the following Bernstein type theorem (Note that here we do not need the algebraic assumption).

\begin{thm}\label{ftc}
Let $f=(f_1,f_2):\R^2\ra \R_1^2$ be a smooth function, such that $M=\text{graph }f:=\{(x,f(x)):x\in \R^2\}$ is a spacelike stationary graph in $\R_1^4$ whose curvature integral $\int_M |K| dM$ converges absolutely. Then $f$ is affine linear or $f=h\mathbf{y}_0+\mathbf{y}_1$, with $h$ a nonlinear harmonic function, $\mathbf{y}_0$
a nonzero lightlike vector and $\mathbf{y}_1$ a constant vector. In both cases,
 $M$ is flat, i.e. $K\equiv 0$.
\end{thm}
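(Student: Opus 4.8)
The plan is to reuse the holomorphic representation of Theorem \ref{iso} and run the same trichotomy as in the proof of Theorem \ref{t1}, but replacing the pointwise control of $W$ by an integral estimate on the Gauss curvature. Recall that in the global isothermal parameters $z=u_1+\sqrt{-1}u_2$ the induced (Riemannian) metric is $g=2\lan\alpha,\bar\alpha\ran|dz|^2=:\lambda^2|dz|^2$, and that \eqref{phi4} reads $\alpha_3^2-\alpha_4^2=-\f{1+c^2}{4}$. If $\alpha_3$ (hence $\alpha_4$) is constant, then $f$ is affine linear and $K\equiv 0$; this is the first allowed conclusion. If $c=-\sqrt{-1}$, then $\alpha_3=\pm\alpha_4$ exactly as in Theorem \ref{t1}, so $|\alpha_3|^2-|\alpha_4|^2\equiv 0$ and $\lambda^2=2(|\alpha_1|^2+|\alpha_2|^2)=\f{1+|c|^2}{2}\equiv 1$ is constant; hence $K\equiv 0$ and $f=h\mathbf{y}_0+\mathbf{y}_1$. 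In both of these cases the conclusion holds without invoking the curvature hypothesis, so the real content lies in the remaining case.

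It thus remains to treat $\alpha_3$ non-constant with $c\ne -\sqrt{-1}$. Here I would write $\alpha_3=\mu\cosh\beta$, $\alpha_4=\mu\sinh\beta$ with $\mu^2=-\f{1+c^2}{4}\ne 0$ and $\beta$ an entire function, as in Theorem \ref{t1}. Setting $A:=1+|c|^2$, $B:=|1+c^2|$ (so $A>B>0$, since $A^2-B^2=4b^2$ was computed in Theorem \ref{t1}) and $v:=\Im\beta$, one finds $\lambda^2=\f12\big(A+B\cos(2v)\big)$. The next step is the Gauss curvature: using $K=-\lambda^{-2}\De_0\log\lambda$ with $\De_0=\partial_{u_1}^2+\partial_{u_2}^2$, the harmonicity of $v$ (so $\De_0v=0$ and $|\n v|^2=|\beta'|^2$), and a direct computation, I expect
\begin{equation*}
K\,dM=-\De_0\log\lambda\;du_1\,du_2=\f{2B\,|\beta'|^2\,\big(A\cos(2v)+B\big)}{\big(A+B\cos(2v)\big)^2}\,du_1\,du_2,
\end{equation*}
whence $|K|\,dM=F(\Im\beta)\,|\beta'|^2\,du_1\,du_2$ with $F(t):=\f{2B\,|A\cos(2t)+B|}{\big(A+B\cos(2t)\big)^2}$. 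This $F$ is continuous (the denominator is $\ge A-B>0$), $\pi$-periodic, non-negative, and strictly positive off the finite set where $\cos(2t)=-B/A$, so that $\int_0^\pi F(t)\,dt>0$.

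The crux is to show that $\int_M|K|\,dM=\infty$ whenever $\beta$ is non-constant, which would contradict the hypothesis and force $\beta$ (hence $\alpha_3,\alpha_4$) to be constant, returning us to the flat affine-linear case and so excluding this case entirely. For this I would read the integral, over the parameter domain $\C$, as a pullback under the holomorphic map $\beta\colon\C\to\C$. Since the real Jacobian of $\beta$ equals $|\beta'|^2$, the area formula gives
\begin{equation*}
\int_{\C}F(\Im\beta)\,|\beta'|^2\,du_1\,du_2=\int_{\C}F(\Im w)\,n(w)\,dA(w),\qquad n(w):=\#\beta^{-1}(w),
\end{equation*}
as an identity in $[0,\infty]$. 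By Picard's theorem (or the fundamental theorem of algebra when $\beta$ is a polynomial), a non-constant entire $\beta$ omits at most one value, so $n(w)\ge 1$ for almost every $w=s+\sqrt{-1}t$; hence the right-hand side dominates $\int_{\C}F(t)\,ds\,dt$. Choosing $t^\ast$ with $F(t^\ast)>0$ and using continuity to get $F\ge\delta>0$ on an interval $(t^\ast-\eta,t^\ast+\eta)$, the $s$-integration over all of $\R$ already renders this infinite. The main obstacle is precisely this pair of steps: extracting the clean factorization $|K|\,dM=F(\Im\beta)\,|\beta'|^2\,du_1\,du_2$ from the curvature computation, and then correctly combining the area formula with Picard's theorem to convert ``$\beta$ non-constant'' into ``infinite total curvature.'' Once this is in place, every surviving case is flat and of one of the two stated forms, which completes the proof.
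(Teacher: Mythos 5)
Your proof is correct, and its skeleton coincides with the paper's: reduce via the trichotomy of Theorem \ref{t1} to the case $\alpha_3=\mu\cosh\be$, $\alpha_4=\mu\sinh\be$ with $\be$ entire and non-constant; factor $|K|\,dM$ as a bounded, periodic, not-identically-vanishing function of $\Im\be$ times $|\be'|^2\,du_1\w du_2$; then combine the area formula with Picard's theorem (a non-constant entire function attains almost every value at least once) to push the integral forward to the image plane, where integration over horizontal strips of infinite area forces $\int_M|K|\,dM=+\infty$, a contradiction. Where you genuinely differ is in how the curvature density is obtained. The paper converts $(\alpha_1,\dots,\alpha_4)$ into the Weierstrass data $\phi,\psi,dh$ of \cite{m-w-w} and invokes the Gauss-map formula \eqref{k1}; you instead compute $K$ intrinsically from the conformal factor, $K=-\la^{-2}\De_0\log\la$ with $\la^2=\f{1}{2}(A+B\cos 2v)$ and $\De_0$ the flat Laplacian, using only the harmonicity of $v=\Im\be$ and the Cauchy--Riemann equations. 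This is more elementary and self-contained, and it yields an exact identity, $|K|\,dM=\f{2B|A\cos 2v+B|}{(A+B\cos 2v)^2}\,|\be'|^2\,du_1\,du_2$, which (via $r^2+r^{-2}=2A/B$) is precisely what \eqref{k2} should produce: the paper's displayed version has slips there (the denominator should be the fourth power $|re^{-\sqrt{-1}v_2}+r^{-1}e^{\sqrt{-1}v_2}|^4$, and absolute values are needed because $2+(r^2+r^{-2})\cos 2v_2$ changes sign), and as a consequence the integrand in the last line of \eqref{t-c} changes sign, so its claimed divergence is not literally immediate there. Your formulation repairs this at no cost, since the argument only needs $F\ge\de>0$ on an interval of values of $\Im w$. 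What the paper's route buys in exchange is the normal curvature: \eqref{k1} computes $-K+\sqrt{-1}K^\perp$ simultaneously, which is what makes the first remark after the theorem (finiteness of $\int_M|K^\perp|\,dM$ also forces flatness) a one-line by-product, whereas your purely intrinsic computation gives $K$ only.
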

\begin{proof}
Denote $z=u_1+\sqrt{-1} u_2$ as before. As in the proof of Theorem \ref{t1}, if $M$ is not a flat surface as we claimed, then the holomorphic differential
$\pd{\mathbf{x}}{z}$ can be expressed as
\begin{equation}\label{g}
(\alpha_1,\alpha_2,\alpha_3,\alpha_4)=
(\frac{1}{2},\frac{c}{2},\mu\cosh\be,\mu\sinh\be)
\end{equation}
where $c=a-b\sqrt{-1}$ is a complex constant with $b>0$, $\mu^2=-\frac{1+c^2}{4}$, and $\be=\be(z)$ is a non-constant homolomorphic function defined on $\C$. We will derive contradiction from this assumption.

By the Weierstrass representation formula given in \cite{m-w-w}, $\pd{\mathbf{x}}{z}$ can be expressed in terms of a pair of meromorphic functions $\phi,\psi$ (the \emph{Gauss maps}) and a holomorphic differential $dh=h'(z)dz$ (the \emph{height differential}) as below:
\begin{equation}
(\alpha_1,\alpha_2,\alpha_3,\alpha_4)
=(\phi+\psi,-\sqrt{-1}(\phi-\psi),1-\phi\psi,1+\phi\psi)h'.
\end{equation}
Comparing these two formulas, we obtain
\[
h'=\frac{\mu}{2}e^{\be},~~
\phi=\frac{1+c\sqrt{-1}}{2\mu}e^{-\be},~~
\psi=\frac{1-c\sqrt{-1}}{2\mu}e^{-\be}.
\]
Note that $\frac{1+c\sqrt{-1}}{2\mu}\cdot \frac{1-c\sqrt{-1}}{2\mu}=-1$,
and $b>0$ implies
$|\frac{1+c\sqrt{-1}}{2\mu}|>|\f{1+\bar{c}\sqrt{-1}}{2\mu}|$.
Denote $\frac{1+c\sqrt{-1}}{2\mu}:=re^{\sqrt{-1}\theta}$ with $r>1$ and $\th\in \R$, then $\frac{1-c\sqrt{-1}}{2\mu}=-r^{-1}e^{-\sqrt{-1}\theta}.$

In \cite{m-w-w} the Gaussian curvature and the normal curvature of a stationary surface was unified in a single formula in terms of $\phi,\psi$ and the Laplacian with respect to the
induced metric $g:=e^{2\om}|dz|^2$ as follows:
\begin{equation}\label{k1}
-K+\sqrt{-1}K^{\perp}
=\Delta\ln(\phi-\bar\psi)=4e^{-2\om}\frac{\phi_z\bar{\psi}_{\bar{z}}}{(\phi-\bar{\psi})^2}.
\end{equation}
Denote $\be:=v_1+\sqrt{-1}v_2$, where $v_1,v_2$ are both real functions on $\C$, then
\begin{equation}\label{k2}
\aligned
|K|e^{2\om}&=4\left|\text{Re}\left(\frac{\phi_z\bar{\psi}_{\bar{z}}}{(\phi-\bar{\psi})^2}\right)\right|\\
&=4\left|\text{Re}\left(\f{e^{2\sqrt{-1}\th}e^{-\be-\bar{\be}}}{(re^{\sqrt{-1}\th}e^{-\be}+r^{-1}e^{\sqrt{-1}\th}e^{-\bar{\be}})^2}\right)\right||\be'(z)|^2\\
&=4\left|\text{Re}\left(\f{1}{(re^{\f{1}{2}(\bar{\be}-\be)}+r^{-1}e^{\f{1}{2}(\be-\bar{\be})})^2}\right)\right||\be'(z)|^2\\
&=\f{4[2+(r^2+r^{-2})\cos(2v_2)]}{|re^{-\sqrt{-1}v_2}+r^{-1}e^{\sqrt{-1}v_2}|^2}|\be'(z)|^2\\
&\geq \f{4[2+(r^2+r^{-2})\cos(2v_2)]}{|r-r^{-1}|^2}|\be'(z)|^2.
\endaligned
\end{equation}
Thus the assumption of finite total curvature is equivalent to saying that
\begin{equation}\aligned\label{t-c}
+\infty>\int_M |K|dM
&=\int_{\C}
|K|e^{2\om}
du_1\wedge du_2\\
&\geq\int_{\C} \f{4[2+(r^2+r^{-2})\cos(2v_2)]}{|r-r^{-1}|^2}|\be'(z)|^2
du_1\wedge du_2\\
&\geq\int_{\C} \f{4[2+(r^2+r^{-2})\cos(2v_2)]}{|r-r^{-1}|^2}dv_1\w dv_2,
\endaligned\end{equation}
where the final equality follows from the assumption that $\beta$ is a non-constant entire function over $\mathbb{C}$, which takes almost every value of $\mathbb{C}$ at least one time.
It is easily-seen that the right hand side of (\ref{t-c}) is divergent, contradicting with the finiteness of the total curvature.

\end{proof}

\noindent{\bf Remarks:}
\begin{itemize}
\item Taking the imaginary part of (\ref{k1}), one can proceed as in (\ref{k2})-(\ref{t-c}) to get a contradiction when the condition ``$\int_M |K| dM<+\infty$" is replaced by
``$\int_M |K^\bot| dM<+\infty$". Therefore, if $M\subset \R_1^4$ is an entire spacelike stationary graph over $\R^2$, whose normal curvature integral converges absolutely,
then $M$ has to be a flat surface.
\item Let $M$ be a non-compact surface equipped with complete metric. If $\int_M |K| dM<+\infty$, then there is a compact Riemann surface $\bar{M}$, such that $M$ is
    conformally equivalently to $\bar{M}\backslash \{p_1,p_2,\cdots,p_r\}$, with $p_1,\cdots,p_r\in \bar{M}$. This is a purely intrinsic result, discovered by A. Huber \cite{hu}.
    Moreover, if we additionally assume $M$ to be a minimal surface in $\R^{2+m}$ ($m$ is arbitrary), then the Gauss map of $M$ is algebraic, and verse visa (see Theorem 1 of \cite{c-o}).
    But this conclusion is no longer true for spacelike stationary surfaces in $\R_1^4$, due to the examples with finite totally curvature and essential singularities (see \cite{m-w-w}). Hence different from the $\R^4$ case \cite{o}, the conclusion of Theorem \ref{ftc} cannot be deduced directly from (\ref{g}).

\item Combing with Theorem 1 of \cite{c-o} and \S 5 of \cite{o}, it is easy to conclude that $M=\text{graph }f:=\{(x,f(x)):x\in \R^2\}$ is a minimal surface in $\R^4$ with finite total curvature if and only if
$f=p(z)$ or $p(\bar{z})$, with $p$ an arbitrary polynomial. Noting that any minimal graph in $\R^4$ over $\R^2$ can be regarded as a spacelike stationary graph in
$\R_1^n$ ($n\geq 5$), the conclusion of Theorem \ref{ftc} cannot be generalized to spacelike stationary graphs in higher dimensional Minkowski spaces.
\end{itemize}

\bibliographystyle{amsplain}

\end{document}